\newtheorem{Def}{Definition}
\newtheorem{Rem}{Remark}
\newtheorem{Thm}{Theorem}
\newtheorem{Lem}{Lemma}
\title{A Winning Strategy for the Game of Antonim}
\author{Zachary Silbernick \\ Robert Campbell}
\begin{document}
	
\maketitle

\begin{abstract} The game of Antonim is a variant of the game Nim, with the additional rule that heaps are not allowed to be the same size. A winning strategy for three heap Antonim has been solved. We will discuss the solution to three-heap Antonim and generalize this theory to an arbitrary number of heaps. \end{abstract}

\section{Introduction}
	
Nim is a classic game between two players with a certain amount of heaps and varied amount of chips inside each heap. The rules of the game are that players take turns taking as many chips out of a single heap. The player who can no longer take any chips (i.e. there are no chips remaining) loses the game. Nim can also be played with coins on a non-negative number line. The number of coins corresponds to the number of heaps, and the space on the number line where each coin lays corresponds to the number of chips in the heap. For example, if a coin is on the space numbered 9, that would represent nine chips in a heap. When Nim is played on a number line, players take turns moving coins to the left until all coins have been moved to the 0 space. The player that moves the last coin to the 0 space is the winner. A winning strategy for the game of Nim is known \cite{1901}.  
	
	\subsection{Antonim}
	
Antonim (also known as Antipathetic Nim) is a variation of Nim in which no two heaps are allowed to have the same number of chips.  A solution for three heap Antonim is known, however a general solution for Antonim was previously unknown \cite{MR2006327} \cite{MR1973110}. In this paper we will examine several game theory theorems and definitions as well as the solution for three heap Antonim. With this knowledge in mind, we will then discuss a general solution for the game of Antonim.
\section{Game Theory} 

\begin{Def}A \underline{game-state} is a position of the pieces that is achievable under the rules of a game. \end{Def} 

We will represent a game-state in Antonim as an ordered tuple ($x_1,x_2,\ldots, x_n$) where $n$ represents the number of heaps and $x_i \in \mathbb{Z} ^{\geq 0}$ represents the number of chips in a heap. Note, by the rules of Antonim, $x_i=x_j$ iff $i=j$ 

\begin{Def} A \underline{$\mathcal{P}$-position} is defined as a game-state where whoevers turn it is will lose assuming the other player plays perfectly.\end{Def}

\begin{Def}An \underline{$\mathcal{N}$-position} is defined as all other game-states that are not $\mathcal{P}$-positions. \end{Def} 

\begin{Lem} Every possible move from a $\mathcal{P}$-position is to an $\mathcal{N}$-position\cite{MR1808891}. \end{Lem}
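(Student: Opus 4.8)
The plan is to argue by contradiction directly from the semantic definition of a $\mathcal{P}$-position, rather than appealing to any external characterization. First I would record the structural fact that Antonim is a finite game: every legal move strictly decreases the total number of chips $\sum_i x_i$, so no play can continue forever and the game is well-founded. This guarantees that the notions ``loses assuming the other player plays perfectly'' and ``plays perfectly'' are actually well-defined — by backward induction on $\sum_i x_i$, each game-state is determined to be either a win or a loss for the player to move — which is what makes the definitions of $\mathcal{P}$- and $\mathcal{N}$-position meaningful in the first place.

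With that in hand, suppose toward a contradiction that some $\mathcal{P}$-position $p$ admits a legal move to a game-state $q$ that is \emph{also} a $\mathcal{P}$-position. Let $A$ be the player to move at $p$ and $B$ the opponent. The key observation is simply that after $A$ moves from $p$ to $q$, it becomes $B$'s turn at $q$, and $B$ is now ``whoever's turn it is'' at a $\mathcal{P}$-position. Applying the definition of a $\mathcal{P}$-position to $q$, player $B$ loses under optimal play, which is to say $A$ wins.

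But this exhibits an explicit winning strategy for $A$ from $p$ — namely, move to $q$ and thereafter play perfectly — contradicting the assumption that $p$ is a $\mathcal{P}$-position, where by definition the player to move, here $A$, must lose against perfect play. Hence no move from a $\mathcal{P}$-position can land on another $\mathcal{P}$-position; every move must land on a position that is not a $\mathcal{P}$-position, i.e. on an $\mathcal{N}$-position, which is exactly the claim.

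I expect the only real subtlety to lie in the bookkeeping of the quantifiers hidden inside ``plays perfectly'': one must read the definition of a $\mathcal{P}$-position as ``the opponent of the mover possesses a winning strategy,'' and track that whose turn it is flips precisely once in passing from $p$ to $q$. The finiteness and well-foundedness step is what licenses speaking about optimal play at all, so I would state it explicitly even though the contradiction itself is very short.
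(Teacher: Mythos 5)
Your proof is correct, but note that the paper itself never proves this lemma --- it is stated bare with a citation to the literature --- so there is no in-paper argument to compare against; what you have done is supply the standard proof from the paper's semantic definition, and it goes through. The contradiction step is sound: if a move led from a $\mathcal{P}$-position $p$ to another $\mathcal{P}$-position $q$, the mover $A$ at $p$ could adopt the strategy ``move to $q$, then play perfectly as the non-mover at $q$,'' and the definition applied at $q$ says the player to move there, $B$, loses; hence $A$ wins from $p$ against all play, in particular against perfect play, contradicting the definition applied at $p$. Since Definition 3 makes $\mathcal{N}$-positions exactly the non-$\mathcal{P}$-positions, the conclusion follows. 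Your insistence on the well-foundedness preamble is not mere boilerplate and is worth keeping: every legal move removes at least one chip, so induction on $\sum_i x_i$ shows each game-state is determined (a win or a loss for the mover), which is precisely what licenses the paper's phrase ``loses assuming the other player plays perfectly'' and makes the $\mathcal{P}$/$\mathcal{N}$ dichotomy exhaustive. One Antonim-specific remark you could add for completeness: terminal positions here are not only the empty state but also blocked states such as $(0,1,2)$, where chips remain yet every reduction would duplicate a heap size; this changes nothing in your argument, since the lemma is vacuous at positions with no moves and your chip-count induction handles blocked states exactly as it handles the empty one.
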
 
\begin{Lem} For every $\mathcal{N}$-position there exists at least one move to a $\mathcal{P}$-position \cite{MR1808891}. \end{Lem}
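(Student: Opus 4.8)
The plan is to prove the statement directly from the definitions of $\mathcal{P}$- and $\mathcal{N}$-positions, exploiting the fact that Antonim is a finite game with no draws. The first step is to record that play must terminate: every legal move strictly decreases the total $\sum_{i=1}^{n} x_i$, which is a non-negative integer, so no sequence of moves can be infinite. This is what makes ``perfect play'' meaningful and, more importantly, guarantees the dichotomy that from every game-state exactly one of the two players has a winning strategy; in particular there are no drawn positions.

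Next I would unpack the hypothesis. Let $S$ be an $\mathcal{N}$-position. By the definition of an $\mathcal{N}$-position, $S$ is not a $\mathcal{P}$-position, so by the definition of a $\mathcal{P}$-position it is \emph{not} the case that the player to move from $S$ loses under perfect play. Combining this with the no-draw dichotomy from the first step forces the player to move from $S$ to possess a winning strategy.

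The final step is to convert the existence of a winning strategy into the existence of the desired move. A winning strategy for the mover prescribes some first move, say from $S$ to a state $Q$; after this move the opponent is to play from $Q$, yet the mover's strategy still guarantees a win. Thus from $Q$ the player whose turn it is, namely the opponent, loses under perfect play, which is exactly the definition of $Q$ being a $\mathcal{P}$-position. Hence $S \to Q$ is a move from the $\mathcal{N}$-position $S$ to a $\mathcal{P}$-position, as required.

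I expect the main obstacle to be justifying the win/lose dichotomy invoked in the second step, since the whole argument hinges on upgrading ``does not lose'' to ``wins.'' I would secure this from the termination observation of the first step; for a fully rigorous treatment one can recast the entire argument as strong induction on $\sum_{i=1}^{n} x_i$, taking the move-less terminal states as the base case, which are $\mathcal{P}$-positions and are vacuously consistent with the companion Lemma. That companion Lemma handles the reverse direction, so that together the two lemmas certify that the recursive labelling of states by $\mathcal{P}$ and $\mathcal{N}$ agrees with the actual win or loss outcome.
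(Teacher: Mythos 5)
Your proof is correct, but there is a wrinkle in the comparison: the paper does not prove this lemma at all---it is stated bare with a citation to the literature---so there is no in-paper argument to measure you against, and what you have supplied is the standard textbook proof that the citation points to. Your decomposition is sound: termination (each move strictly decreases $\sum_i x_i$) makes the game finite; finiteness plus the absence of draws gives the determinacy dichotomy, which upgrades ``the mover does not lose under perfect play'' (the negation of the $\mathcal{P}$-position definition) to ``the mover has a winning strategy''; and the first move prescribed by that strategy lands, by definition, in a $\mathcal{P}$-position. You are also right that the dichotomy is the one genuinely load-bearing step---the paper's definitions of $\mathcal{P}$- and $\mathcal{N}$-positions tacitly presuppose that the outcome under perfect play is well defined, which is precisely Zermelo's theorem for finite games---and your fallback of strong induction on $\sum_i x_i$ is the cleanest way to discharge it: one shows inductively that every position either has a move to a position losing for the mover (making it $\mathcal{N}$) or has all moves to positions winning for the mover (making it $\mathcal{P}$), and both this lemma and its companion (Lemma 1) fall out of that recursive characterization simultaneously. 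One Antonim-specific detail worth making explicit in the base case: because heaps must be pairwise distinct, terminal positions are not chip-free but are states such as $(0,1,2,\ldots,n-1)$ from which no legal move exists, so ``no legal move'' rather than ``no chips remaining'' is the correct terminal condition; your phrase ``move-less terminal states'' already handles this correctly, and those states are indeed $\mathcal{P}$-positions under the normal-play convention the paper uses.
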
 

\section{Winning Strategy}

A winning strategy for Antonim would be to always give our opponent $\mathcal{P}$-positions. If we give our opponent a $\mathcal{P}$-position, then since it is our opponent's turn, and the game state is a $\mathcal{P}$-position, by definition, our opponent will lose the game. Note from a $\mathcal{P}$-position, our opponent can only give us $\mathcal{N}$-positions (Lemma 1). From any $\mathcal{N}$-position given to us, we can give a $\mathcal{P}$-position back to our opponent (Lemma 2). We can continue to follow this process until our opponent can no longer take any chips. It follows that we would win the game.  

\begin{Rem} Finding a winning strategy for Antonim is equivalent to finding the $\mathcal{P}$-positions for any game of Antonim. \end{Rem}

 \section{The existence of $\mathcal{P}$-positions}

\begin{Thm} Let $x_1,x_2, \ldots, x_{n-1}\in \mathbb{Z} ^{\geq 0}$ such that $x_i=x_j$ iff $i=j$. There exists $z\in \mathbb{Z} ^{\geq 0}$ such that $\left(x_1, x_2, \ldots, x_{n-1},z\right )$ is a $\mathcal{P}$-position for Antonim. \end{Thm}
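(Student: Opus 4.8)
The plan is to argue by contradiction. Suppose the statement fails for some admissible tuple $T=(x_1,\dots,x_{n-1})$; that is, no choice of $z$ completes $T$ to a $\mathcal{P}$-position, so every legal completion $(x_1,\dots,x_{n-1},z)$ is an $\mathcal{N}$-position. I will extract a contradiction from this by combining Lemmas 1 and 2 with a finiteness (pigeonhole) argument, rather than by trying to exhibit $z$ explicitly.

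First I would record an auxiliary fact: \emph{any} admissible $(n-1)$-tuple $S$ has at most one $\mathcal{P}$-completion. Indeed, if $(S,z)$ and $(S,z')$ were both $\mathcal{P}$-positions with $z<z'$, then shrinking the last heap from $z'$ to $z$ is a legal move (it is legal precisely because $z\notin S$, which holds since $(S,z)$ is a valid game-state), and it carries one $\mathcal{P}$-position to another, contradicting Lemma 1.

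Next, using the contradiction hypothesis, I would show that for every legal value $z$ the winning reply guaranteed by Lemma 2 from the $\mathcal{N}$-position $(x_1,\dots,x_{n-1},z)$ must act on one of the first $n-1$ heaps, never on the last heap. The point is that reducing the last heap to any smaller legal value $z''$ again produces a completion $(x_1,\dots,x_{n-1},z'')$ of the \emph{same} tuple $T$, which by hypothesis is also an $\mathcal{N}$-position and hence cannot be the target of a winning move. (The cleanest instance is $z=\mathrm{mex}(T)$, where the last heap cannot be moved at all, since every smaller slot is occupied.) Consequently each legal $z$ yields a single-coordinate reduction $\widehat{T}(z)$ of $T$ such that $(\widehat{T}(z),z)$ is a $\mathcal{P}$-position.

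Finally I would close the argument by counting. There are only finitely many single-coordinate reductions of $T$ (each $x_i$ can be lowered in at most $x_i$ ways), but infinitely many legal values $z\in\mathbb{Z}^{\geq 0}\setminus\{x_1,\dots,x_{n-1}\}$. By pigeonhole, one fixed reduction $\widehat{T}^{\ast}$ must occur for two distinct values $z<z'$, so $(\widehat{T}^{\ast},z)$ and $(\widehat{T}^{\ast},z')$ are both $\mathcal{P}$-positions, contradicting the uniqueness fact above. I expect the main obstacle to be the middle step: pinning down that the winning replies must avoid the last heap relies on the \emph{global} hypothesis that the entire fiber $\{(x_1,\dots,x_{n-1},z)\}$ is $\mathcal{N}$ (not merely one completion), and recognizing that this, paired with the finiteness of the available reductions, is exactly what powers the pigeonhole contradiction.
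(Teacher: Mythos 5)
Your proposal is correct and follows essentially the same route as the paper's proof: assume no $\mathcal{P}$-completion exists, force every winning reply guaranteed by Lemma 2 onto the first $n-1$ heaps, observe via Lemma 1 that each such single-heap reduction can serve as the $\mathcal{P}$-position target for at most one value of $z$, and derive a contradiction from finitely many reductions versus infinitely many legal $z$. If anything, you are slightly more careful than the paper: you make explicit the step (hidden in the paper's ``WLOG'') that the winning move cannot act on the last heap precisely because the whole fiber over $(x_1,\ldots,x_{n-1})$ is assumed to consist of $\mathcal{N}$-positions, and your count of available reductions (a sum, roughly $x_1+\cdots+x_{n-1}$) corrects the paper's stated product $x_1\times\cdots\times x_{n-1}$, though only finiteness matters.
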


\begin{proof} Suppose, for a proof by contradiction, that $\exists x_1, x_2, \ldots, x_{n-1} \in \mathbb{Z} ^{\geq 0}$ and the position ($x_1,x_2 \ldots, x_{n-1},z$) is an $\mathcal{N}$-position for all $z \in \mathbb{Z} ^{\geq 0}$ where $z\neq x_i$ for $1\leq i \leq n-1$. There are at most $x_i$ moves that can be made from the $i^{th}$ position and thus there are at most $x_1\times x_2 \times \cdots \times x_{n-1}$ moves that can be made from the first $n-1$ positions of the game state $\left(x_1, x_2, \ldots, x_{n-1},z\right )$. Fix $z_0$ s.t. $z_0 \neq x_i$. One of the possible moves for the game state $\left(x_1, x_2, \ldots, x_{n-1},z_0\right )$ has to be to a $\mathcal{P}$-position since $\left(x_1, x_2, \ldots, x_{n-1},z_0\right )$ is an $\mathcal{N}$-position (Lemma 2). WLOG say that $\left(x_1-y_1, x_2, \ldots, x_{n-1}, z_0\right)$ is a $\mathcal{P}$-position (where $x_1-y_1 \in \mathbb{Z} ^{\geq 0}$ and $x_1-y_1 \neq x_i)$. This $\mathcal{P}$-position is unique to $z_0$ since for all other valid game states ($x_1, x_2, $ $\ldots, x_{n-1},z_0+g$) (where $g\in \mathbb Z^{\geq 0}$) the game state $\left(x_1-y_1, x_2, \ldots, x_{n-1},z_0+g\right)$ is not a $\mathcal{P}$-position. Indeed, from $\left(x_1-y_1, x_2, \ldots, x_{n-1},z_0+g\right)$ one could move to the $\mathcal{P}$-position $\left(x_1-y_1, x_2, \ldots, x_{n-1}, z_0\right)$ by taking $g$ chips from the $n$th heap. Thus each value $x_1,x_2,\ldots,x_{n-1}$ that makes ($x_1,x_2,\ldots,x_{n-1},z$) a $\mathcal{P}$-position is unique to $z$. There are a finite amount of moves from ($x_1, x_2, \ldots, x_{n-1},z$), but an infinite amount of $z$s, each of which has a single move from ($x_1, x_2, \ldots, x_{n-1},z$) to a $\mathcal{P}$-position. We have a contradiction. Therefore, for each valid ($x_1,x_2,\ldots,x_{n-1}$) there exists $z\in \mathbb{Z} ^{\geq 0}$ such that ($x_1, x_2, \ldots, x_{n-1},z$) is a $\mathcal{P}$-position.  \end{proof}

\section{$\mathcal{P}$-positions for three heap Antonim}
The following table and theorem can be found in \underline{Winning Ways} \cite{MR2006327}. An extended version of this table would give the $\mathcal{P}$-positions for any 3-heap game of Antonim. Let us call this the $3 \mathcal{P}$-Antonim Table (Table \ref{3p}).  The $3 \mathcal{P}$-Antonim Table gives the value $z$ that makes the Antonim game ($x_1,x_2,z$) a $\mathcal{P}$-position.  
\begin{table} \begin{tabular}{c|cccccccccccccc}

&0&1&2&3&4&5&6&7&8&9&10&11&12\\  \hline
0&0&2&1&4&3&6&5&8&7&10&9&12&11\\
1&2&X&0&5&6&3&4&9&10&7&8&13&14\\
2&1&0&X&6&5&4&3&10&9&8&7&14&13\\
3&4&5&6&X&0&1&2&11&12&13&14&7&8\\
4&3&6&5&0&X&2&1&12&11&14&13&8&7\\
5&6&3&4&1&2&X&0&13&14&11&12&9&10\\
6&5&4&3&2&1&0&X&14&13&12&11&10&9\\
7&8&9&10&11&12&13&14&X&0&1&2&3&4\\
8&7&10&9&12&11&14&13&0&X&2&1&4&3\\
9&10&7&8&13&14&11&12&1&2&X&0&5&6\\
10&9&8&7&14&13&12&11&2&1&0&X&6&5\\
11&12&13&14&7&8&9&10&3&4&5&6&X&0\\
12&11&14&13&8&7&10&9&4&3&6&5&0&X\\
13&14&11&12&9&10&7&8&5&6&3&4&1&2&\\
14&13&12&11&10&9&8&7&6&5&4&3&2&1\\
\end{tabular} \caption{Part of the $3 \mathcal{P}$ Antonim Table}\label{3p} \end{table}  

Table \ref{3p} was created with the following theorem. 

\begin{Thm}
The game state ($x_1,x_2,z$) is a $\mathcal{P}$-position in Antonim if and only if ($x_1+1,x_2+1,z+1)$ is a $\mathcal{P}$-position in Nim \cite{MR2006327}. \end{Thm}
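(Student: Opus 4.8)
The plan is to show that the set $\mathcal{A}$ of Antonim positions $(x_1,x_2,z)$ (with $x_1,x_2,z$ distinct and nonnegative) whose shift $(x_1+1,x_2+1,z+1)$ has nim-sum zero is exactly the set of Antonim $\mathcal{P}$-positions; since a three-heap Nim position is a $\mathcal{P}$-position precisely when its nim-sum (bitwise XOR, written $\oplus$) vanishes \cite{1901}, this is equivalent to the stated biconditional. Because Antonim is a finite game (the total number of chips strictly decreases with every move), Lemmas 1 and 2 pin down the $\mathcal{P}$-positions uniquely, so $\mathcal{A}$ will coincide with the $\mathcal{P}$-positions as soon as I verify (i) every Antonim move from a position in $\mathcal{A}$ lands outside $\mathcal{A}$, and (ii) from every Antonim position outside $\mathcal{A}$ there is an Antonim move into $\mathcal{A}$. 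The connecting observation throughout is that an Antonim move, which reduces one heap $x_i$ to some smaller $x_i'$ while keeping all heaps distinct, becomes after the $+1$ shift an ordinary legal Nim move reducing $x_i+1$ to $x_i'+1$. Thus the Antonim moves embed into the Nim move set on the shifted board, the only difference being that Antonim additionally forbids moves which empty a shifted heap down to $0$ or which create two equal heaps.

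For (i), let $(x_1,x_2,z)\in\mathcal{A}$, so the shifted triple $(a,b,c)=(x_1+1,x_2+1,z+1)$ is a Nim $\mathcal{P}$-position. Any Antonim move from $(x_1,x_2,z)$ corresponds to a genuine Nim move from $(a,b,c)$; by Lemma 1 applied to Nim its result is a Nim $\mathcal{N}$-position, i.e. has nonzero nim-sum, so the corresponding Antonim position does not lie in $\mathcal{A}$. Hence no Antonim move preserves membership in $\mathcal{A}$, and this half is immediate.

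For (ii) — the part I expect to be the main obstacle — let $(x_1,x_2,z)\notin\mathcal{A}$, so $(a,b,c)$ satisfies $s:=a\oplus b\oplus c\neq0$ with $a,b,c\geq1$ distinct. The worry is that the winning Nim moves supplied by Bouton's theory might be illegal in Antonim, either by reducing a heap to $0$ or by producing a collision. The key computation dissolves this: the winning move on heap $a$ reduces it to $a\oplus s=b\oplus c$, yielding the triple $(b\oplus c,\,b,\,c)$ of nim-sum $0$. Since $b\neq c$ we have $b\oplus c\neq0$, so the new heap stays $\geq1$; moreover $b\oplus c=b$ would force $c=0$ and $b\oplus c=c$ would force $b=0$, both impossible as $b,c\geq1$. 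Hence this result is automatically a legal Antonim position (all heaps $\geq1$ and distinct), and its un-shift lies in $\mathcal{A}$; the symmetric statements hold for the moves on $b$ and on $c$. It remains only to check that at least one of these three moves is a genuine reduction: letting $k$ be the leading set bit of $s$, an odd number of $a,b,c$ carry bit $k$, so at least one of them, say the heap equal to $a$, satisfies $a\oplus s<a$, and its winning move is then a valid Antonim move into $\mathcal{A}$.

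Combining (i) and (ii) with the uniqueness of the $\mathcal{P}/\mathcal{N}$ partition gives $\mathcal{A}=\{\text{Antonim }\mathcal{P}\text{-positions}\}$, which is exactly the claimed equivalence. The crux is the single identity that the winning move on heap $a$ sends it to $b\oplus c$, combined with the floor constraint $a,b,c\geq1$ forced by the $+1$ translation; this is what guarantees the Nim-optimal move never violates the distinctness or nonnegativity rules of Antonim, and everything else is the classical Nim analysis transported through the shift. As a confidence check, the unique terminal Antonim position $(0,1,2)$ shifts to $(1,2,3)$, whose nim-sum is indeed $0$.
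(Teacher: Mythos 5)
Your proof is correct, but there is nothing in the paper to compare it against: the paper states this theorem without proof, citing \emph{Winning Ways} \cite{MR2006327}, so your argument fills a genuine gap rather than diverging from an in-paper proof. Your route is the natural Bouton-style one: define $\mathcal{A}$ as the set of shifted-nim-sum-zero positions and verify the two closure properties that, together with Lemmas 1 and 2 and finiteness of the game, uniquely characterize the $\mathcal{P}$-positions (note that terminal positions automatically lie in $\mathcal{A}$ by contraposition of your condition (ii), so your check of $(0,1,2)$ is reassurance rather than a needed extra case). The crux, which you identify and verify cleanly, is that the Nim-optimal move replacing $a$ by $a\oplus s=b\oplus c$ is automatically Antonim-legal: $b\oplus c\neq 0$ since $b\neq c$, and $b\oplus c\notin\{b,c\}$ since $b,c\geq 1$, so the $+1$ shift's floor of $1$ on every heap is exactly what protects the winning move from both collapse to $0$ and collisions. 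One caveat worth flagging: your explicit hypothesis that $x_1,x_2,z$ are pairwise distinct is essential and matches the paper's formal rule ($x_i=x_j$ iff $i=j$), but the paper's own Table \ref{3p} lists $(0,0,0)$ as a $\mathcal{P}$-position (as in the coin-on-a-strip version, where several coins may rest at square $0$), and there the biconditional literally fails: $(0,0,0)$ is terminal, hence a $\mathcal{P}$-position, yet its shift $(1,1,1)$ has nim-sum $1$. So your proof establishes the theorem in exactly the scope in which it is true, and implicitly shows why the statement must exclude positions with repeated heap sizes.
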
 

Unfortunately, this theorem does not hold for games of Antonim with more than three heaps. For example, the game state $(1,2,5,6)$ is a $\mathcal{P}$-position in Nim, but $(0,1,4,5)$ is not a $\mathcal{P}$-position for Antonim. Notice $(0,1,4,5)$ is not listed in the $3 \mathcal{P}$ Antonim Table (Table \ref{3p}), and from $(0,1,4,5)$ one could give the $\mathcal{P}$-position $(0,1,3,5)$. Since one can move to a $\mathcal{P}$-position from $(0,1,4,5)$ by Lemma 2, the game-state $(0,1,4,5)$ must be a $\mathcal{N}$-position.

\section{Higher Dimensional Antonim}
	
	There is a pattern in table 1. An entry $z$ that makes the Antonim game ($x_1,x_2,z$) a $\mathcal{P}$-position is filled in as the least positive integer not coinciding with any earlier entry in the same row or column, nor coinciding with either the row or column heading of the table \cite{MR2006327}. \
	
	For illustration, if we were to start filling out the $3 \mathcal{P}$-Antonim Table above using this pattern, we would start in the top left corner. Clearly, the game-state ($0,0,0$) is a $\mathcal{P}$-position, but what value of $z$ makes ($0,1,z$) a $\mathcal{P}$-position? If we follow the pattern we noticed above, $z$ cannot be 1 because 1 is already listed in column heading. It cannot be 0 as well because 0 is already listed in the row. After eliminating these values, the least positive integer left over would be 2. Thus ($0,1,2$) is a $\mathcal{P}$-position. Continuing down the first row of Table 1, ($0,1,2$) is again a $\mathcal{P}$-position, but what value of $z$ makes the Antonim game ($0,3,z$) a $\mathcal{P}$-position? Well, 3 cannot be the value of $z$ because it is listed in the column heading, and 0, 1, and 2 cannot be the value of $z$ because they are previous values in the row. The least positive integer left is 4. Thus ($0,3,4$) is a $\mathcal{P}$-position. We can continue to use this process to fill out the rest of the $3 \mathcal{P}$-Antonim Table. Throughout this example, notice that we never eliminated 0 when it appeared in the row or column heading. The value 0, is a special case. We eliminate 0, as the value of $z$ when it appears in the row or column, but when it appears in a row or column heading, we do not eliminate it as a possible value for $z$ as 0 could make the game-state a $\mathcal{P}$-position. Case and point, the value of $z$ for the game-state $(0,0,z)$ is in fact 0 even though 0 appears in the row and column heading. 

	The pattern in table 1 does not extend to Antonim with more heaps given its present wording. We can, however, generalize this idea as follows: We will now call the rows of a table of Antonim dimension 1 (denoted $d_1$) and the columns of a table of Antonim dimension 2 ($d_2$). In games of Antonim with greater than three heaps, new dimensions come into play. For example, in four heap Antonim the new dimension that presents itself will be dimension 3, ($d_3$).
	
	An example with this new dimension, ($d_3$), is shown in the $4 \mathcal{P}$-Antonim table below (table \ref{4p}). Since we can't reproduce this in one table we make the $4 \mathcal{P}$-Antonim table with layers of tables. The first table is the first layer of the $4 \mathcal{P}$-Antonim table. The bold zero in the top left corner represents 0 chips in the first heap of a four heap game of Antonim. (This is three heap Antonim).

\begin{table}	
\begin{tabular}{c|cccccc} \LARGE $\mathbf{0}$

 &0&1&2&3&4&5\\  \hline
0&0&2&1&4&3&6\\
1&2&X&0&5&6&3\\
2&1&0&X&6&5&4\\
3&4&5&6&X&0&1\\
4&3&6&5&0&X&2\\
5&6&3&4&1&2&X\\

\end{tabular} \ \ \ \ \ \ \ \
\begin{tabular}{c|cccccc} \LARGE $\mathbf{1}$

 &0&1&2&3&4&5\\  \hline
0&2&X&0&5&6&3\\
1&X&X&X&X&X&X\\
2&0&X&X&4&3&6\\
3&5&X&4&X&2&0\\
4&6&X&3&2&X&7\\
5&3&X&6&0&7&X\\

\end{tabular}  
\caption{Part of the $4 \mathcal{P}$-Antonim Table}\label{4p} \end{table}  

Let $\left(x_1, x_2, \ldots, x_{n-1}, z\right )$ be a $\mathcal{P}$-position in a game of Antonim where $x_i \in Z^{> 0}$. Let $$A=\left\{\alpha \in \mathbb Z^{\geq 0} | \exists y_j \leq x_j \:where \:(x_1,\ldots, x_{j-1},x_j-y_j,x_{j+1},\ldots, x_{n-1},\alpha) \:is\: a\: \mathcal{P}-position\right\}$$ (the set of all values that already appear in the other dimensions of the $n \mathcal{P}$-Antonim table).

\begin{Lem} $z\notin A\cup \left\{x_1, \ldots, x_{n-1}\right\}$. \end{Lem}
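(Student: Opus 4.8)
The plan is to split the claim into its two conjuncts, $z \notin \{x_1, \ldots, x_{n-1}\}$ and $z \notin A$, and to dispatch each one separately. Both will follow almost immediately from the rules of Antonim together with Lemma 1, so I do not expect any serious computation.

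For the first conjunct I would simply invoke the defining rule of Antonim that no two heaps may be the same size. Since $(x_1, \ldots, x_{n-1}, z)$ is a $\mathcal{P}$-position it is in particular a legal (achievable) game-state, so all of its coordinates are pairwise distinct; in particular $z \neq x_i$ for every $1 \leq i \leq n-1$, which is exactly the statement $z \notin \{x_1, \ldots, x_{n-1}\}$.

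For the second conjunct I would argue by contradiction using Lemma 1. First I would pin down the reading of $A$: the witnessing $y_j$ is genuinely positive, i.e.\ $1 \leq y_j \leq x_j$, since $A$ collects the values appearing in \emph{other} cells of the table, obtained by actually reducing one of the first $n-1$ heaps (if $y_j = 0$ were permitted then $z \in A$ would hold trivially and the lemma would be false). Now suppose $z \in A$. Then there are an index $j$ and an integer $1 \leq y_j \leq x_j$ such that
$$P := (x_1, \ldots, x_{j-1}, x_j - y_j, x_{j+1}, \ldots, x_{n-1}, z)$$
is a $\mathcal{P}$-position. Set $Q := (x_1, \ldots, x_{n-1}, z)$, the $\mathcal{P}$-position we started from. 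The state $P$ differs from $Q$ only in its $j$-th coordinate, which has decreased by $y_j \geq 1$ chips, so passing from $Q$ to $P$ amounts to removing $y_j$ chips from a single heap, a legal Antonim move. Its legality is guaranteed by the hypothesis that $P$ is itself a valid game-state (so the distinctness rule is automatically respected). Thus there is a move from the $\mathcal{P}$-position $Q$ to the $\mathcal{P}$-position $P$, contradicting Lemma 1, which asserts that every move from a $\mathcal{P}$-position lands on an $\mathcal{N}$-position. Hence $z \notin A$, and combining the two conjuncts yields $z \notin A \cup \{x_1, \ldots, x_{n-1}\}$.

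As for difficulty, there is no real obstacle here: the argument is a one-line application of Lemma 1 in each direction. The only points that require care are the interpretation of $A$ (that the reduction $y_j$ is strictly positive, so that $P \neq Q$ and the move $Q \to P$ is nontrivial) and the observation that the single-coordinate change from $Q$ to $P$ is a bona fide one-heap move rather than one that would violate the distinct-heaps rule — and the latter is automatic precisely because $P$ is assumed to be a legal $\mathcal{P}$-position.
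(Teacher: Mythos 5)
Your proof is correct and takes essentially the same approach as the paper: both conjuncts are handled identically, with $z \notin A$ following from Lemma 1 via the impossibility of a move from one $\mathcal{P}$-position to another --- the paper merely phrases this contrapositively, showing that every $\alpha \in A$ makes $\left(x_1, \ldots, x_{n-1}, \alpha\right)$ an $\mathcal{N}$-position rather than deriving an explicit contradiction at $\alpha = z$. Your side remark that the witnessing $y_j$ must satisfy $y_j \geq 1$ is a legitimate clarification the paper leaves implicit, and it is indeed required for the definition of $A$ (and hence the lemma) to be sensible.
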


\begin{proof}  We know $z\neq x_1, x_2, \ldots, x_{n-1}$, by the rules of Antonim. Let $\alpha \in A$. Then there exists $y_j \leq x_j$ such that $(x_1,\ldots, x_{j-1},x_j-y_j,x_{j+1},\ldots, x_{n-1},\alpha)$ is a $\mathcal{P}$-position. From  $\left(x_1, x_2, \ldots, x_{n-1}, \alpha \right )$ there is a move to $(x_1,\ldots, x_{j-1},x_j-y_j,x_{j+1},\ldots, x_{n-1},\alpha)$. Since $(x_1,\ldots, x_{j-1},x_j-y_j,x_{j+1},\ldots, x_{n-1},\alpha)$ is a $\mathcal{P}$-position, $\left(x_1, x_2, \ldots, x_{n-1}, \alpha \right )$ must be a $\mathcal{N}$-position by Lemma 1. Therefore, $z\notin A$ \end{proof}

With Lemma 3, we know what values $z$ cannot be, namely, any of the earlier values in the dimensions or headings of an Antonim table. Our next theorem will show that $z$ is in fact the least non-negative integer left over after all of the previous values have been eliminated. 

\section{Main Theorem}

\begin{Thm} Let $\left(x_1, x_2, \ldots, x_{n-1}\right )$ be an ($n-1$)-heap game of Antonim where $x_i \in Z^{> 0}$. Let $A$ be as before. Let $ S= \mathbb{Z} ^{\geq 0} - (A\cup \left\{x_1, \ldots, x_{n-1}\right\}$). Let $z$ be the least element in $S$. Then $\left(x_1, x_2, \ldots, x_{n-1}, z\right )$ is a $\mathcal{P}$-position. \end{Thm}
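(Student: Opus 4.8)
The plan is to prove that $z = \min S$ makes $(x_1, x_2, \ldots, x_{n-1}, z)$ a $\mathcal{P}$-position by verifying the defining property directly: every legal move from this game-state leads to an $\mathcal{N}$-position. First I would check that $S$ is nonempty so that $z = \min S$ is well defined: by Theorem 1 there is some value $z^\ast$ for which $(x_1, \ldots, x_{n-1}, z^\ast)$ is a $\mathcal{P}$-position, and by Lemma 3 any such $z^\ast$ satisfies $z^\ast \in S$, so $S \neq \emptyset$ and has a least element. The tool I will use at the end is the standard fact, which follows from Lemma 2 together with the $\mathcal{P}$/$\mathcal{N}$ dichotomy, that a game-state all of whose moves lead to $\mathcal{N}$-positions must itself be a $\mathcal{P}$-position; for if it were an $\mathcal{N}$-position, Lemma 2 would produce a move to a $\mathcal{P}$-position, contradicting the assumption.

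Next I would classify the moves available from $(x_1, \ldots, x_{n-1}, z)$ into two kinds. A move of the first kind reduces one of the first $n-1$ heaps, say heap $j$ by a positive amount $y_j$, leaving the last coordinate equal to $z$, and thus lands on $(x_1, \ldots, x_j - y_j, \ldots, x_{n-1}, z)$. If any such move reached a $\mathcal{P}$-position, then by the very definition of $A$ we would have $z \in A$, contradicting $z \in S$; hence every move of the first kind goes to an $\mathcal{N}$-position. A move of the second kind reduces the last heap from $z$ to a legal value $z' < z$ with $z' \notin \{x_1, \ldots, x_{n-1}\}$. Since $z = \min S$ and $z' < z$, we have $z' \notin S$, and combined with $z' \notin \{x_1, \ldots, x_{n-1}\}$ this forces $z' \in A$. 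By the definition of $A$ there is then a $\mathcal{P}$-position obtained from $(x_1, \ldots, x_{n-1}, z')$ by reducing one of the first $n-1$ heaps, so $(x_1, \ldots, x_{n-1}, z')$ admits a move to a $\mathcal{P}$-position and is therefore an $\mathcal{N}$-position by Lemma 1; thus every move of the second kind also goes to an $\mathcal{N}$-position.

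Having shown that all legal moves from $(x_1, \ldots, x_{n-1}, z)$ lead to $\mathcal{N}$-positions, I would invoke the characterization noted above to conclude that $(x_1, \ldots, x_{n-1}, z)$ is a $\mathcal{P}$-position, which completes the argument. The step I expect to be the main obstacle is the second kind of move: one must argue carefully that every legal target $z' < z$ (so $z' \neq x_i$) genuinely lands in $A$, rather than merely failing to lie in $S$, and then convert membership in $A$ into an explicit move to a witnessing $\mathcal{P}$-position. This is precisely where the minimality of $z$ enters, and it is the heart of this minimum-excludant argument; the first kind of move, by contrast, is an almost immediate consequence of the definition of $A$ combined with $z \in S$.
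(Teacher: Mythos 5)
Your proof is correct and takes essentially the same route as the paper: split the moves from $\left(x_1, \ldots, x_{n-1}, z\right)$ into those reducing one of the first $n-1$ heaps and those reducing the last, show each lands on an $\mathcal{N}$-position using the definition of $A$ and the minimality of $z$, and conclude via the contrapositive of Lemma 2. If anything, your version is slightly tighter than the paper's: you verify that $S$ is nonempty so that $z$ is well defined (the paper omits this), and your contrapositive argument for moves in the first $n-1$ heaps ("a $\mathcal{P}$-position there would force $z \in A$") sidesteps the paper's implicit assumption that its witnessing $\alpha \in A$ satisfies $\alpha < z$ so that the onward move to $\left(x_1-y_1, x_2, \ldots, x_{n-1}, \alpha\right)$ is actually legal.
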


\begin{proof} In order to show $\left(x_1, x_2, \ldots, x_{n-1}, z\right )$ is a $\mathcal{P}$-position, we want to show all possible moves in the game-state $\left(x_1, x_2, \ldots, x_{n-1}, z\right )$ are to $\mathcal{N}$-positions. Notice $\exists$ $\alpha \in A$ such that $\left(x_1-y_1, x_2, \ldots, x_{n-1}, \alpha \right)$ is a $\mathcal{P}$-position, and since $z\neq \alpha$,  $\left(x_1-y_1, x_2, \ldots, x_{n-1}, z \right)$ is a $\mathcal{N}$-position since from $\left(x_1-y_1, x_2, \ldots, x_{n-1}, z \right)$ one could move to the $\mathcal{P}$-position $\left(x_1-y_1, x_2, \ldots, x_{n-1}, \alpha \right)$. Similar arguments show, $\left(x_1, x_2-y_2, \ldots, x_{n-1}, z\right)$, \\ $\left(x_1, x_2, x_3-y_3 \ldots, x_{n-1}, z\right)$, \ldots,$\left(x_1, x_2, \ldots, x_{n-1}-y_{m-1}, z\right)$ are all $\mathcal{N}$-positions. Also, \\ $\left(x_1, x_2, \ldots, x_{n-1}, z-y\right )$ is a $\mathcal{N}$-position (for any value $0\leq y \leq z$) because $z-y\in (A\cup \left\{x_1, \ldots, x_{n-1}\right\})$, which by definition is an $\mathcal{N}$-position or an invalid game-state in Antonim. Since all possible moves of $\left(x_1, x_2, \ldots, x_{n-1}, z\right )$ are to $\mathcal{N}$-positions, by Lemma 1, $\left(x_1, x_2, \ldots, x_{n-1}, z\right )$ is a $\mathcal{P}$-position. \end{proof}

\section{Conclusion}

With thereom 3, we know the $\mathcal{P}$-positions for any game-state of Antonim. Since we can now find a $\mathcal{P}$-position for any n-heap game of Antonim, we can use these $\mathcal{P}$-positions to always win a game of Antonim. With this winning strategy, there is not a lot of research that could be continued on understanding Antonim, however, one could apply this research to other fields of mathematics where games are involved.

\bibliography{ultimatebibliography}{}
\bibliographystyle{plain}

\end{document}